\theoremstyle{plain}
\newtheorem{theorem}{Theorem}[section]
\newtheorem{corollary}[theorem]{Corollary}
\newtheorem{lemma}[theorem]{Lemma}
\theoremstyle{definition}
\newtheorem{definition}[theorem]{Def\/inition}
\newcommand{\epf}{{\ifhmode\unskip\nobreak\hfil\penalty50 \hskip1em
\else\nobreak\fi \nobreak\mbox{}\hfil\mbox{$\square$} \parfillskip=0pt
\finalhyphendemerits=0 \par\vskip5pt}}
\newcommand{\lra}{\longrightarrow}
\newcommand{\CC}{\mathbf{C}}
\newcommand{\FF}{\mathbf{F}}
\newcommand{\QQ}{\mathbf{Q}}
\newcommand{\RR}{\mathbf{R}}
\newcommand{\ZZ}{\mathbf{Z}}
\newcommand{\BBB}{\mathbb{B}}
\newcommand{\Hasse}{\mathrm{Hasse}}
\newcommand{\kk}{\mathbf{k}}
\newcommand{\hh}{\mathbf{h}}
\newcommand{\ee}{\mathbf{e}}
\newcommand{\CO}{\mathcal{O}}
\newcommand{\gn}{\mathfrak{n}}
\newcommand{\gp}{\mathfrak{p}}
\newcommand{\gI}{\mathfrak{I}}
\newcommand{\uhp}{\mathfrak{H}}
\newcommand{\Fpbar}{{\overline{\FF}_p}}
\makeindex \setcounter{tocdepth}{2}
\definecolor{orange}{rgb}{1,0.5,0}
\definecolor{Indigo}{rgb}{0.2,0.1,0.7}
\definecolor{Violet}{rgb}{0.5,0.1,0.7}
\theoremstyle{definition}
\theoremstyle{remark}
\numberwithin{equation}{subsection}
\numberwithin{figure}{subsection} \numberwithin{table}{subsection}
\newcommand{\End}{{\operatorname{End}}}
\newcommand{\Fr}{{\operatorname{Fr }}}
\newcommand{\Hom}{{\operatorname{Hom}}}
\newcommand{\Ker}{{\operatorname{Ker}}}
\newcommand{\Spec}{{\operatorname{Spec }}}
\newcommand{\Ver}{{\operatorname{Ver }}}
\newcommand{\SL}{{\operatorname{SL }}}
\newcommand{\Lie}{{\operatorname{Lie}}}
\newcommand{\gern}{{\frak{n}}}
\newcommand{\gerp}{{\frak{p}}}
\newcommand{\gerI}{{\frak{I}}}
\newcommand{\uA}{{\underline{A}}}
\newcommand{\uB}{{\underline{B}}}
\newcommand{\uC}{{\underline{C}}}
\newcommand{\calA}{{\mathcal{A}}}
\newcommand{\calF}{{\mathcal{F}}}
\newcommand{\calH}{{\mathcal{H}}}
\newcommand{\calO}{{\mathcal{O}}}
\newcommand{\calP}{{\mathcal{P}}}
\def\BB{\mathbb{B}}
\def\CC{\mathbb{C}}
\def\DD{\mathbb{D}}
\def\FF{\mathbb{F}}
\def\PP{\mathbb{P}}
\def\QQ{\mathbb{Q}}
\def\RR{\mathbb{R}}
\def\ZZ{\mathbb{Z}}
\newcommand{\ra}{{\; \rightarrow \;}}
\newcommand{\ol}{{\mathcal{O}_L}}
\newcommand{\Cl}{{\text{\rm Cl}}}
\newcommand{\Xbar}{\overline{X}}
\newcommand{\cAbar}{\overline{\calA}}
\newcommand{\Qpbar}{\overline{\QQ}_p}
\begin{document}

\title[Weights of Hilbert modular forms]{Minimal weights of Hilbert modular forms in characteristic $p$}

\author{Fred Diamond}
\email{fred.diamond@kcl.ac.uk}
\address{Department of Mathematics,
King's College London, WC2R 2LS, UK}
\author{Payman Kassaei}
\email{payman.kassaei@kcl.ac.uk}
\address{Department of Mathematics,
King's College London, WC2R 2LS, UK}

\thanks{F.D.~was partially supported by EPSRC Grant EP/L025302/1.}
\subjclass[2010]{11F41 (primary), 11F33, 14G35  (secondary).}
\keywords{Hilbert modular forms, Hasse invariants}
\date{December 2016}

\begin{abstract}  
We consider mod $p$ Hilbert modular forms associated to a totally real field of degree $d$ in which $p$ is
unramified.  We prove that every such form arises by multiplication by partial Hasse invariants
from one whose weight (a $d$-tuple of integers) lies in a certain cone contained in the set of non-negative
weights, answering a question of Andreatta and Goren.  The proof is based on properties of the 
Goren--Oort stratification on mod $p$ Hilbert modular varieties established by Goren and Oort, and
Tian and Xiao.
\end{abstract}

\maketitle

\section{Introduction}  \label{sec:intro}

Let $F$ be a totally real field of degree $d = [F:\QQ]>1$.    
In this paper, we consider mod-$p$ Hilbert modular forms associated to $F$.
Recall that the weight of a (geometric) Hilbert modular form associated to $F$ is a $d$-tuple of integers.
Unlike the case of Hilbert modular forms in characteristic zero, or indeed mod-$p$ classical modular forms
(i.e., the excluded case $d=1$), there are non-zero Hilbert modular forms in characteristic $p$
for which some components of the weight are negative.
Indeed the partial Hasse invariants considered by Andreatta and Goren in \cite{AG} are examples of such
forms, and they raise the question (\cite[15.8]{AG}) of whether all such forms are obtained from forms
of non-negative weight by multiplication by partial Hasse invariants.  We answer the
question here in the affirmative, assuming $p$ is unramified in $F$.   In fact, we prove a stronger result
along these lines (Corollary~\ref{cor:filtration} below), which also immediately implies a vanishing
result (Corollary~\ref{cor:vanishing}) recently proven independently by
Goldring and Koskivirta using methods different from ours (see \cite{GK}
 and their forthcoming work with Stroh on related questions).

We now explain this in more detail and give the precise statements of our results.
Let $\CO_F$ denote the ring of integers of $F$, and fix non-zero ideals $\gn$ and
$\gI$ of $\CO_F$.  We let $p$ be a prime which is relatively prime to $\gn$ and the
discriminant $D_{F/\QQ}$.  Assuming that $\gn$ is sufficiently small (e.g., contained
in $N\CO_F$ for some integer $N > 3$), then there is a 
scheme $X = X^\gI(\gn)$, smooth of dimension $d$ over $\ZZ_p$, which
parametrizes $d$-dimensional abelian varieties with $\CO_F$-action,
together with suitable polarization data (depending on $\gI$) and level structure
(depending on $\gn$); see \S\ref{section: HMV} below.  The scheme $X$ is a model for a disjoint union of 
Hilbert modular varieties over $\CC$ with complex points $\Gamma_i \backslash \uhp^d$,
where $\{\Gamma_i = \Gamma_1^{\gI}(\gn)_i\}$ is a certain collection of congruence subgroup of $\SL_2(F)$
acting on $d$ copies of the complex upper-half plane $\uhp$ via the $d$ embeddings $F \to \RR$.
In this paper, we are only concerned with a geometric special fibre $\Xbar:=X_{\Fpbar}$.

Let $\BBB$ denote the set of embeddings $F \to \Qpbar$; since $p$ is unramified
in $F$, we may identify $\BBB$ with the set of ring homomorphisms $\CO_F \to \Fpbar$.
For $\beta \in \BBB$, we let $\ee_\beta$ denote the basis element of $\ZZ^\BBB$
associated to $\beta$.

Let $\omega = s_*\Omega^1_{\cAbar/\Xbar}$ denote the sheaf of invariant differentials of the
universal abelian variety $\pi:\cAbar \to \Xbar$.  Then $\omega$ is, locally on $\Xbar$, free of rank one
over $\CO_{\Xbar} \otimes \CO_F \cong \oplus_{\beta\in \BBB}\ \CO_{\Xbar}$.
Therefore, $\omega$ decomposes as a direct sum over $\beta \in \BBB$
of line bundles $\omega_\beta$.  For any $\kk = \sum k_\beta \ee_\beta \in \ZZ^\BBB$, we let 
$\omega^\kk = \otimes_\beta\ \omega_\beta^{k_\beta}$, and we define the space
of $\gerI$-polarized geometric Hilbert modular forms of level $\gn$, and weight $\kk$ over $\Fpbar$
to be the set of global sections:
$$M^\gI_\kk(\gn,\Fpbar) = H^0(\Xbar, \omega^{\kk}).$$

Note that we may decompose $\BBB  = \coprod \BBB_{\gp}$, where the union
is over the set of primes $\mathfrak{p}$ of $\CO_F$ dividing $p$, and $\BBB_{\gp}
= \{\, \beta:\CO_F \to \Fpbar\,|\,\ker(\beta) = \gp\,\}$.    Let $\sigma$ denote the
Frobenius automorphism $x \mapsto x^p$ of $\Fpbar$; note that composition with
$\sigma$ defines a cyclic permutation of each $\BBB_\gp$, of order $|\BBB_\gp|
 = [F_\gp:\QQ_p]$.  The partial Hasse invariant $H^\gerI_\beta$, defined in \cite[7.12]{AG},
 is an element of $M^\gI_{\hh_\beta}(\gn,\Fpbar)$, i.e., a mod-$p$ Hilbert modular form of
 weight $\hh_\beta$, where $\hh_\beta = p \ee_{\sigma^{-1}\circ\beta} - \ee_\beta$.
 Note that $\sigma^{-1}\circ \beta = \beta$ if and only if $\beta \in \BBB_\gp$ for
 a prime $\gp$ of degree one, in which case $\hh_\beta = (p-1) \ee_\beta$; otherwise
 the coefficient in $\hh_\beta$ of $\ee_\beta$ (resp.~$\ee_{\sigma^{-1}\circ\beta}$) is $-1$
 (resp.~$p$).   For any $\kk \in \ZZ^\BBB$, multiplication by $H^\gerI_\beta$ defines an injection
$M^\gI_{\kk-\hh_\beta}(\gn,\Fpbar) \hookrightarrow M^\gI_{\kk}(\gn, \Fpbar)$.
The key result is the following:
\begin{theorem}  \label{thm:key}  Suppose that $\kk = \sum k_\beta\ee_\beta \in \ZZ^\BBB$, and that
$\beta \in \BBB$ is such that $p k_\beta < k_{\sigma^{-1}\circ\beta}$.  Then multiplication by the
partial Hasse invariant $H^\gerI_\beta$ induces an isomorphism:
$$M^\gI_{\kk-\hh_\beta}(\gn,\Fpbar) \,\,\,\stackrel{\sim}{\lra} \,\,\, M^\gI_{\kk}(\gn, \Fpbar).$$
\end{theorem}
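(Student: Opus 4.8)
The plan is to combine the injectivity already recorded above with a vanishing statement for the cokernel, analyzed through the geometry of the divisor cut out by $H^\gerI_\beta$. Write $\beta' = \sigma^{-1}\circ\beta$, and let $Z \subset \Xbar$ be the vanishing locus of $H^\gerI_\beta$. By the results of Andreatta--Goren this is (the closure of) the codimension-one Goren--Oort stratum, it is reduced, and the section $H^\gerI_\beta$ exhibits an isomorphism $\omega^{\hh_\beta} \cong \calO_{\Xbar}(Z)$. Multiplication by $H^\gerI_\beta$ then fits into the short exact sequence of sheaves on $\Xbar$
$$0 \lra \omega^{\kk-\hh_\beta} \stackrel{\cdot H^\gerI_\beta}{\lra} \omega^{\kk} \lra \omega^{\kk}|_Z \lra 0,$$
so that, upon taking global sections, the surjectivity of $M^\gI_{\kk-\hh_\beta}(\gn,\Fpbar) \to M^\gI_{\kk}(\gn,\Fpbar)$ reduces to the vanishing $H^0(Z, \omega^{\kk}|_Z) = 0$.

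To establish this vanishing I would invoke the structural description of the Goren--Oort strata due to Goren--Oort and Tian--Xiao: the stratum $Z$ admits a fibration $\pi\colon Z \to Y$ over a lower-dimensional Shimura variety $Y$ (of dimension $d-2$) whose fibres are isomorphic to $\PP^1$. The crucial input is the behaviour of the automorphic line bundles along a fibre $L \cong \PP^1$. For $\gamma \neq \beta, \beta'$ the bundle $\omega_\gamma$ is pulled back from $Y$ and hence trivial on $L$, while one computes
$$\deg\bigl(\omega_\beta|_L\bigr) = p, \qquad \deg\bigl(\omega_{\beta'}|_L\bigr) = -1.$$
This can be cross-checked through the Kodaira--Spencer isomorphism $K_{\Xbar} \cong \omega^{2\sum_\gamma \ee_\gamma}$ together with adjunction along $L$, which forces $\deg(\calO_{\Xbar}(Z)|_L) = -2p$, in agreement with $\hh_\beta = p\ee_{\beta'} - \ee_\beta$. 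Consequently $\deg(\omega^{\kk}|_L) = p\,k_\beta - k_{\beta'}$.

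Under the hypothesis $p k_\beta < k_{\beta'}$ this degree is negative, so $\omega^{\kk}|_L$ has no nonzero global sections on any fibre; hence $\pi_*(\omega^{\kk}|_Z) = 0$, and therefore $H^0(Z, \omega^{\kk}|_Z) = H^0(Y, \pi_*(\omega^{\kk}|_Z)) = 0$. Combined with injectivity, this yields the asserted isomorphism.

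The main obstacle is the precise determination of the fibre degrees $\deg(\omega_\beta|_L)$ and $\deg(\omega_{\beta'}|_L)$: this rests on the explicit description, along $Z$, of the universal abelian variety and of the isogeny relating it to abelian varieties of shifted signature, with careful bookkeeping of the Frobenius twists that intertwine the $\beta$- and $\beta'$-components. A separate point requiring care is the degree-one case $\beta' = \beta$, where $Z$ is the non-ordinary locus at $\gp$ and the fibre degree becomes $(p-1)k_\beta$; there the positivity of $\omega_\beta$ on the fibres must be read off from the corresponding quaternionic structure, and one must again confirm that $H^\gerI_\beta$ cuts out $Z$ with multiplicity one, so that the exact sequence above is valid.
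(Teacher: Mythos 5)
Your argument in the case $\sigma^{-1}\circ\beta \neq \beta$ (equivalently $f_\gerp>1$, where $\gerp$ is the prime below $\beta$) is correct and is essentially the paper's proof of that case: the same short exact sequence $0 \to \omega^{\kk-\hh_\beta} \to \omega^{\kk} \to \omega^{\kk}|_Z \to 0$ (the scheme-theoretic equality $Z_{\{\beta\}} = Z(H^\gerI_\beta)$ is one of the Goren--Oort properties recalled in \S\ref{section: GO}, so the multiplicity-one point you flag is fine), the same reduction to $H^0(Z,\omega^{\kk}|_Z)=0$, and the same fibre degrees $p$, $-1$, $0$ along the Tian--Xiao $\PP^1$-bundle (Theorem~\ref{theorem: TX} and Lemma~\ref{lemma: fibres}). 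The genuine gap is the degree-one case $\sigma^{-1}\circ\beta = \beta$, which you defer as ``a point requiring care'': there your strategy does not need more care, it fails. Theorem~\ref{theorem: TX} assumes $f_\gerp>1$; when $f_\gerp=1$ the stratum $Z = Z_{\{\beta\}}$ carries no $\PP^1$-fibration at all (by the general results of Tian--Xiao it is essentially a quaternionic Shimura variety of dimension $d-1$, attached to the algebra ramified at $\beta$ and $\gerp$), so ``the fibre degree $(p-1)k_\beta$'' refers to fibres that do not exist. Worse, Lemma~\ref{lemma: torsion} with $\sigma^{-1}\circ\beta=\beta$ gives $\omega_\beta|_Z \cong \omega_\beta^{-p}|_Z$, i.e.\ $\omega_\beta^{p+1}|_Z \cong \calO_Z$: the bundle $\omega_\beta$ is \emph{torsion} on $Z$, so no positivity or degree argument carried out inside $Z$ can detect the sign of $k_\beta$.

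In fact the vanishing your reduction requires is simply false in this case. Take $\kk = -(p+1)\ee_\beta + n(p-1)\sum_{\gamma\neq\beta}\ee_\gamma$ with $n\geq 1$; the hypothesis $pk_\beta < k_{\sigma^{-1}\circ\beta}$ holds (it amounts to $k_\beta<0$), yet the torsion relation gives $\omega^{\kk}|_Z \cong \omega^{n(p-1)\sum_{\gamma\neq\beta}\ee_\gamma}|_Z$, which has the nonzero global section $\prod_{\gamma\neq\beta}(H^\gerI_\gamma)^n|_Z$ (nonzero on every component of $Z$, since every component meets the open stratum $W_{\{\beta\}}$, which is disjoint from each $Z_{\{\gamma\}}$, $\gamma \neq \beta$). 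So $H^0(Z,\omega^{\kk}|_Z)\neq 0$ even though the theorem's conclusion still holds: the cokernel of multiplication by $H^\gerI_\beta$ on global sections is only the \emph{image} of $H^0(\Xbar,\omega^{\kk})$ in $H^0(Z,\omega^{\kk}|_Z)$, which is smaller. This is exactly why the paper treats $f_\gerp=1$ by a completely different argument, proving the stronger statement that $M^\gI_{\kk}(\gn,\Fpbar)=0$ whenever $k_\beta<0$: a descending induction through the strata $Z_{T_i}$, $T_i=\BB\setminus\{\beta_0,\dots,\beta_i\}$ (with $\beta_0=\beta$), whose base case is the one-dimensional stratum, where all $\omega_\gamma$ with $\gamma\neq\beta$ are torsion (Lemma~\ref{lemma: torsion}) while $\deg(\omega_\beta)>0$ on each component because each component contains a superspecial point (Lemma~\ref{lemma: intersect}); the induction step combines your short-exact-sequence trick with the observation that a nonzero section can be divisible by at most a finite power of $H^\gerI_{\beta_i}$. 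You would need to supply an argument of this kind, or some genuinely new input, to close the degree-one case.
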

The proof of the theorem is based on properties of the Goren--Oort stratification established by
Goren and Oort~\cite{GO} and Tian and Xiao~\cite{TX}.

We will apply Theorem~\ref{thm:key} to deduce a property of the weight filtration on Hilbert modular
forms.  To define the weight filtration, Andreatta and Goren prove (\cite[8.19]{AG}) that if $f$ is a
non-zero element of $M^\gI_{\kk}(\gn, \Fpbar)$, then there is a unique maximal element of the set
$$\left\{\left.\, \sum_\beta n_\beta \ee_\beta \in \ZZ_{\ge 0}^\BBB\, \,\right|\, \,  f = g\prod_\beta (H_\beta^{\gI})^{n_\beta}\,\mbox{for some $g \in M^\gI_{\kk - \sum n_\beta \hh_\beta}(\gn, \Fpbar)$}\,\right\}$$
(under the usual partial ordering).  The weight filtration $\Phi(f)$ is then defined to be
$\kk - \sum n_\beta \hh_\beta$ (i.e., the weight of $g$), where $\sum n_\beta \ee_\beta$
is this maximal element.

We now state the consequences of Theorem~\ref{thm:key} in terms of two cones in $\QQ^\BBB$.
We first define the {\em minimal, standard,} and {\em Hasse} cones as follows
$$C^{\min}  = \left\{\left.\, \sum_\beta x_\beta \ee_\beta \in \QQ^\BBB\, \,\right|\, \, p x_\beta \ge x_{\sigma^{-1}\circ\beta}\,\ \mbox{for all $\beta\in \BBB$}\,\right\},$$
$$C^{\rm st}=\left\{\left.\, \sum_\beta x_\beta \ee_\beta \in \QQ^\BBB\, \,\right|\, \,   x_\beta \ge 0\,\ \mbox{for all $\beta\in \BBB$}\,\right\},$$

$$C^{\Hasse}  = \left\{\left.\, \sum_\beta y_\beta \hh_\beta \in \QQ^\BBB \,\,\right|\, \, y_\beta \ge 0 \,\ \mbox{for all $\beta\in \BBB$}\,\right\}.$$
Note that we have the containments $C^{\min} \subset  C^{{\rm st}} \subset C^{\Hasse}$, each of which is an equality if and only if $p$ splits completely in $F$.

\begin{corollary}  \label{cor:filtration}  If $f$ is a non-zero form in $M^\gI_\kk(\gn,\Fpbar)$ for some
$\kk \in \ZZ^\BBB$, then $\Phi(f) \in C^{\min}$.
\end{corollary}

The motivation for the result arises from the first author's work with Shu Sasaki, in the course
of which we observed that an analogous property seems to hold for labelled weights of crystalline
lifts of two-dimensional mod $p$ local Galois representations.  As discussed above, this answers
the question raised by Andreatta and Goren in \cite[15.8]{AG}; it also immediately implies
the following result which has recently been independently proven by Goldring and Koskivirta
(Corollary~3.2.4 of \cite{GK}):

\begin{corollary} \label{cor:vanishing} If $M^\gI_\kk(\gn,\Fpbar) \neq 0$, then $\kk \in C^{\Hasse}$.
\end{corollary}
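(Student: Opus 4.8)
The plan is to deduce the statement immediately from the sharper Corollary~\ref{cor:filtration}, together with the elementary fact that $C^{\Hasse}$ is an additively closed cone containing $C^{\min}$. Since by hypothesis $M^\gI_\kk(\gn,\Fpbar)\neq 0$, I would begin by fixing a non-zero form $f$ in this space and invoking the definition of the weight filtration: there is a tuple $\sum_\beta n_\beta \ee_\beta \in \ZZ_{\ge 0}^\BBB$ and a form $g$ of weight $\Phi(f) = \kk - \sum_\beta n_\beta \hh_\beta$ with $f = g\prod_\beta (H_\beta^{\gI})^{n_\beta}$. Rearranging yields the decomposition
$$\kk = \Phi(f) + \sum_\beta n_\beta \hh_\beta.$$

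Next I would check that each of the two summands lies in $C^{\Hasse}$. By Corollary~\ref{cor:filtration} we have $\Phi(f)\in C^{\min}$, and the chain of containments $C^{\min}\subset C^{\rm st}\subset C^{\Hasse}$ recorded above then gives $\Phi(f)\in C^{\Hasse}$. The remaining term $\sum_\beta n_\beta \hh_\beta$ is a non-negative integer combination of the weights $\hh_\beta$, so it lies in $C^{\Hasse}$ directly from the definition of that cone. Finally, since $C^{\Hasse}$ consists of all non-negative combinations of the $\hh_\beta$, it is closed under addition; hence $\kk$, being a sum of two elements of $C^{\Hasse}$, lies in $C^{\Hasse}$ as well.

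The entire content of the argument is Corollary~\ref{cor:filtration}; everything else is the formal observation that the Hasse cone is additively closed and contains the minimal cone. I therefore do not expect any genuine obstacle here, beyond making sure the containment $C^{\min}\subset C^{\Hasse}$ is in hand (it is immediate from the definitions and was already noted). In short, the vanishing statement is a purely formal consequence of the filtration statement: an arbitrary weight supporting a non-zero form differs from its filtration weight by a non-negative combination of partial Hasse weights, and both pieces lie in the Hasse cone.
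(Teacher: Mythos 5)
Your proposal is correct and follows essentially the same route as the paper: both invoke Corollary~\ref{cor:filtration} to place $\Phi(f)$ in $C^{\min}\subset C^{\Hasse}$, then write $\kk=\Phi(f)+\sum_\beta n_\beta\hh_\beta$ from the definition of the weight filtration and conclude by additivity of the Hasse cone. Your version merely spells out the closure-under-addition step that the paper dismisses with ``which clearly belongs to $C^{\Hasse}$.''
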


\subsection*{Acknowledgements}  We are grateful to Shu Sasaki for conversations on research setting
the stage for this paper, to Wushi Goldring for informing us of his work with Koskivirta and Stroh, and to
the referee for helpful remarks.

\section{Hilbert modular varieties}\label{section: HMV}

 Let $F$ be a totally real field of degree $d>1$, and $\calO_F$ its ring of integers. Let $p$ be a prime number which is unramified in $F$, and $\gern$ an ideal of $\calO_F$ relatively prime to $p$. We also assume that $\gern$ is contained in $N\calO_F$ for some $N>3$.     For a  prime ideal $\gerp$  of $\calO_F$ dividing $p$, let $\kappa_\gerp = \calO_F/\gerp$, a finite field of order $p^{f_\gerp}$. Let $\kappa$ be a finite field containing an isomorphic copy of each $\kappa_\gerp$. We denote by $\mathbb{Q}_\kappa$ the fraction field of $W(\kappa)$, and fix, once and for all,  embeddings $\mathbb{Q}_\kappa \hookrightarrow \QQ_p^{\rm ur}
 \subset \Qpbar$ (inducing $\kappa \hookrightarrow \Fpbar$), as well as embeddings $\overline{\QQ} \hookrightarrow \overline{\QQ}_p $ and
 $\overline{\QQ} \hookrightarrow \CC$.

Let $\BB={\rm Emb}(F,\mathbb{Q}_\kappa)$.  We write $\BB=\textstyle\coprod_{\mathfrak{p}|p}
\mathbb{B}_\mathfrak{p}$, where 
\[
\mathbb{B}_\mathfrak{p}= \{\beta\in\mathbb{B}\colon
\beta^{-1}(pW(\kappa))=\gerp\},
\]
for a prime ideal $\mathfrak{p}$ dividing $p$.
Since $p$ is unramified in $F$, $\BB$ is canonically identified with $\Hom(\calO_F,\kappa)$. Using the fixed embeddings of $\overline{\QQ}$ into  $\overline{\QQ}_p$ and $\CC$, we can also identify  $\BB$ with the set of infinite places of $F$, and can think of $\BB_\gerp$ as the set of infinite places inducing the prime ideal $\gerp$.

Let $\sigma$ denote the Frobenius automorphism of $\mathbb{Q}_\kappa$, lifting $x \mapsto
x^p$ modulo $p$. It acts on $\BB$ by $\beta \mapsto \sigma
\circ \beta$. It acts transitively on each $\BB_\mathfrak{p}$. The decomposition \[\mathcal{O}_F
\otimes_\mathbb{Z} W(\kappa)=\bigoplus_{\beta \in \mathbb{B}}
W(\kappa)_\beta,\] where $W(\kappa)_\beta$ is $W(\kappa)$ with the
$\mathcal{O}_F$-action given via the embedding $\beta$, induces a decomposition
\[\Lambda=\bigoplus_{\beta\in \mathbb{B}} \Lambda_\beta,\] for any $\mathcal{O}_F
\otimes_\mathbb{Z} W(\kappa)$-module $\Lambda$. 

\

Let $A$ be an abelian scheme over a scheme $S$, with
real multiplication  defined by $\iota\colon \calO_F \rightarrow \End_S(A)$. Then
the dual abelian scheme $A^\vee$ has an induced canonical real
multiplication.  We define $\calP_A = \Hom_{\calO_F}(A, A^\vee)^{\rm
sym}$, which we view as an \'etale sheaf on $S$ of projective $\calO_F$-modules of rank $1$ with
a notion of positivity in which the positive elements correspond to
$\calO_F$-equivariant polarizations.

Fix  a representative $[(\gerI,
\gerI^+)]\in \Cl^+(F)$. Consider the functor
 \[
 \calH: \langle{\rm Schemes}/\ZZ_p \rangle \ra \langle{\rm Sets}\rangle
 \] that associates to a $\ZZ_p$-scheme $S$, the set of all isomorphism classes of  $\underline{A}/S=(A/S,\iota,\lambda,\alpha)$ such that:  
 \begin{itemize}
 \item $A$ is an abelian scheme of relative dimension $d$ over  $S$;
\item $\iota\colon\mathcal{O}_F \hookrightarrow {\rm End}_S(A)$ is a ring
homomorphism endowing $A$ with real multiplication by  $\calO_F$;
 \item   the map $\lambda$ is an $\calO_F$-linear isomorphism $\lambda\colon (\calP_A, \calP_A^+)
\rightarrow (\gerI, \gerI^+)$  of sheaves on the \'etale site of $S$ such that  $A \otimes_\ol \gerI \cong
A^\vee$;
\item  the map $\alpha\colon (\calO_F/\gern)^2 \ra A[\gern]$ is an $\calO_F$-linear isomorphism of group schemes. 
\end{itemize}
Since $p$ is unramified in $F$, the existence of $\lambda$ is equivalent to $\Lie(A)$ being a locally free $(\calO_F\otimes\calO_S)$-module of rank one. 

There is a scheme $X^\gerI(\gern)=X$  of relative dimension $d$ over $\Spec(\ZZ_p)$ which represents the above functor. It is called the Hilbert modular scheme of polarization $\gerI$ and level $\Gamma(\gern)$, and is smooth over $\Spec(\ZZ_p)$.

We let $X_\kappa:=X \otimes_{\ZZ_p} \kappa$. It represents the above functor  restricted to schemes over $\kappa$.  We denote the base extension of $X_\kappa$ under our fixed embedding $\kappa \subset \Fpbar$ by $\Xbar$.

Let $\calA/X$ (similarly, $\calA_\kappa/X_\kappa$, $\cAbar/\Xbar$) denote the universal abelian scheme, and denote by $\pi$ the structural morphism in all cases. Let $\omega_{\calA/X}= \pi_*\Omega^1_{\calA/X}$ denote the sheaf of invariant differentials. Similarly define $\omega_{\calA_\kappa/X_\kappa}$ and $\omega_{\cAbar/\Xbar}$. When there is no possibility of confusion, we will simply use $\omega$ to denote any of these sheaves.

Since $p$ is unramified in $F$, we know that  $\omega_{\cAbar/\Xbar}$ is locally free of rank one over  $\calO_{\Xbar} \otimes_\ZZ \calO_F \cong \oplus_{\beta\in \BBB}\  \CO_{\Xbar}$.
Therefore, we have 
\[
\omega=\oplus_{\beta\in \BBB}\  \omega_{\beta},
\]
where, for each $\beta \in \BBB$, the sheaf $\omega_\beta$ is locally free of rank one over $\calO_{\Xbar}$. 

For $\beta \in \BBB$, we let $\ee_\beta$ denote the basis element of $\ZZ^\BBB$
associated to $\beta$. For any $\kk = \sum k_\beta\ee_\beta \in \ZZ^\BBB$, we define
\[
\omega^\kk=\otimes_{\beta \in \BB}\  \omega_\beta^{k_\beta}.
\]

\section{Hilbert modular forms in characteristic $p$.}

Let notation be as in \S \ref{section: HMV}. Let $R$ by any $\kappa$-algebra. We define the space of $\gerI$-polarized geometric Hilbert modular forms over $R$ of weight $\kk = \sum k_\beta\ee_\beta \in \ZZ^\BB$, and level $\Gamma(\gern)$ to be
 \[
M^\gI_{\kk}(\gn,R)=H^0(X_\kappa \otimes R, \omega^\kk).
\] 
 
 Important examples of such modular forms are the partial Hasse invariants defined first in \cite{G}, and studied further in \cite{AG}. Let $\Ver: \calA^{(p)}_\kappa \ra \calA_\kappa$ be the Verschiebung morphism of the universal abelian variety over $X_\kappa$. The induced morphism $\Ver^*: \omega_{\calA_\kappa/X_\kappa} \ra \omega_{\calA^{(p)}_\kappa/X_\kappa}$ can be written as $\Ver=\oplus_{\beta \in \BB}\ \Ver_\beta^*$, where
\[
\Ver_\beta^*:  \omega_\beta=(\omega_{\calA_\kappa/X_\kappa})_\beta  \ra (\omega_{\calA^{(p)}_\kappa/X_\kappa})_\beta \cong \omega^p_{\sigma^{-1}\circ\beta}
\]
gives a section of $\Hom(\omega_\beta,\omega^p_{\sigma^{-1}\circ\beta})\cong\omega_\beta^{-1}\otimes \omega_{\sigma^{-1}\circ\beta}^p$,  which we denote by $H_\beta^\gerI$.  Thus $H_\beta^\gerI$ is a $\gerI$-polarized geometric Hilbert modular forms over $\kappa$ of weight 
 \[
 \hh_\beta: = p\ee_{\sigma^{-1}\circ\beta}-\ee_\beta,
 \]
 and level $\Gamma(\gern)$.

While we haven't discussed $q$-expansions in this article, we would like to point out that each $H_\beta^\gerI$ has $q$-expansions equal to $1$ at an unramified cusp on each connected component, which motivates the following question: given a mod-$p$ modular form $f$, what is the maximum power of each $H^\gerI_\beta$ by which $f$ is divisible?  In \cite[8.19]{AG}, it is proven that if $f$ is a
non-zero element of $M^\gI_{\kk}(\gn, \Fpbar)$, then there is a unique maximal element of the set
$$\left\{\left.\, \sum_{\beta\in\BB} n_\beta \ee_\beta \in \ZZ_{\ge 0}^\BBB\, \,\right|\, \,  f = g\prod_{\beta\in\BB} (H^{\gerI}_\beta)^{n_\beta}\,\mbox{for some $g \in M^\gI_{\kk - \sum n_\beta \hh_\beta}(\gn, \Fpbar)$}\,\right\}$$
(under the usual partial ordering).  The weight filtration $\Phi(f)$ is then defined to be
$\kk - \sum n_\beta \hh_\beta$ (i.e., the weight of $g$), where $\sum n_\beta \ee_\beta$
is this maximal element.
 
\section{The Goren--Oort Stratification}\label{section: GO}
In this section, we recall the definition of the Goren--Oort stratification on $X_\kappa$ and some of its properties proven in \cite{GO}.  We will then recall some key results proven by Tian--Xiao \cite{TX} regarding the global structure of these strata, which we will use in the proof of our main theorem.

 Let $x=(A,\iota,\lambda,\alpha)$  be a closed point of $X_\kappa$.  The type of $x$ is defined by
\begin{equation} \tau(x) = \{\beta \in \BB: \DD\left(\Ker(\Fr_A)\cap
\Ker(\Ver_A)\right)_\beta\neq 0\},
\end{equation}where $\DD$ denotes the contravariant Dieudonn\'e module.

Let $T \subseteq \BB$.  There is a locally closed subset
$W_T$ of $X_\kappa$ with the property that a closed point $x$ of
$X_\kappa$  belongs to $W_T$ if and only if
$\tau(x) = T$. 
We define $Z_T$ to be the Zariski closure of $W_T$.

The following properties of this stratification are proven in \cite{GO}.
\begin{itemize}
\item The collection $\{W_T: T \subseteq \BB\}$ is a
stratification of $X_\kappa$; we have  $Z_T=\bigcup_{T' \supseteq T} W_{T'}$.
\item Each $W_T$ is non-empty, and equi-dimensional of dimension $d - |T|$.
\item Each $Z_T$ is nonsingular.

\item For any $T \subset \BB$, we have $Z_T=\bigcap_{\beta\in T} Z(H^\gerI_\beta)$ scheme theoretically, where $Z(.)$ denotes the vanishing locus.
\item $W_\BB=Z_\BB$ is the non-empty finite set of all points $x=(A,\iota,\lambda,\alpha)$ where $A$ is a superspecial abelian variety.
 
\end{itemize}

\begin{lemma}\label{lemma: intersect}  Let $T \subset \BB$. Every connected component of $Z_T$ intersects $Z_\BB$ nontrivially.

\end{lemma}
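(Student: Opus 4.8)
The plan is to induct on the codimension $d-|T|$ of $Z_T$, peeling off one partial Hasse divisor at a time and descending through the stratification until the deepest (superspecial) stratum $Z_\BB$ is reached.

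First I would reduce to a convenient geometric setting. Since $Z_T$ is nonsingular, its connected components coincide with its irreducible components; thus a connected component $Y$ of $Z_T$ is irreducible, nonsingular, of dimension $d-|T|$. Moreover $Y$ is projective: because $T\ne\emptyset$, $Y$ lies in the vanishing locus of some $H_\beta^\gerI$, and since the partial Hasse invariants are units near the cusps (they have $q$-expansion $1$), the closure of $Z_T$ in a toroidal compactification is disjoint from the boundary; hence $Z_T$, and therefore $Y$, is proper, and so is closed in the minimal compactification, on which $\det\omega=\omega^{\sum_\gamma\ee_\gamma}$ is ample, giving $Y$ projective.

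Next I set up the induction. The base case is $T=\BB$, where $Z_\BB$ is a finite set of superspecial points and the statement is trivial. For the inductive step assume $T\subsetneq\BB$, so $\dim Y=d-|T|\ge 1$. The key claim is that $Y$ meets a deeper stratum, i.e. $Y\cap Z_{T\cup\{\beta\}}\neq\emptyset$ for some $\beta\in\BB\setminus T$. Granting this, I choose a connected component $Y'$ of $Z_{T\cup\{\beta\}}$ through a point of $Y\cap Z_{T\cup\{\beta\}}$; since $Z_{T\cup\{\beta\}}\subseteq Z_T$ and $Y'$ is connected, $Y'\subseteq Y$, and $Y'$ is again projective. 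As $\dim Y'=d-|T|-1$, the inductive hypothesis gives $Y'\cap Z_\BB\neq\emptyset$, whence $Y\cap Z_\BB\neq\emptyset$, completing the induction. Note that $H_\beta^\gerI$ cannot vanish identically on $Y$, since $\dim Z_{T\cup\{\beta\}}<\dim Y$.

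It remains to prove the claim, and this is the main obstacle. I would phrase it as the assertion that the open stratum $W_T$ is quasi-affine: then the projective, positive-dimensional $Y$ cannot be contained in $W_T$, so $Y$ meets $Z_T\setminus W_T=\bigcup_{\beta\notin T}Z_{T\cup\{\beta\}}$, producing the desired $\beta$. Now $W_T$ is exactly the non-vanishing locus on $Z_T$ of $\prod_{\beta\notin T}H_\beta^\gerI$, a section of $\omega^{\kk_T}|_{Z_T}$ with $\kk_T=\sum_{\beta\notin T}\hh_\beta$, so it suffices to show $\omega^{\kk_T}|_{Z_T}$ is ample. The difficulty is that $\kk_T$ lies outside the global ample cone, since each $\hh_\beta$ carries the negative entry $-\ee_\beta$; thus ampleness is genuinely a statement about the restriction to the stratum and does not follow from positivity of $\det\omega$ alone. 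To obtain it I would combine the ampleness of $\det\omega$ with the relations among the partial Hodge line bundles $\omega_\gamma$ forced on $Z_T$ by the degeneration of $\Fr$ and $\Ver$ recorded by the type $T$ — concretely the comparisons between $\omega_\beta$ and $\omega_{\sigma^{-1}\circ\beta}^{\,p}$ along the strata — which is precisely the information packaged in the Tian--Xiao description of the strata as (iterated) $\PP^1$-bundles. In that picture each $Z_{T\cup\{\beta\}}$ restricts on a vertical $\PP^1$ to a positive-degree divisor, forcing the section to vanish there; this is the crux, and the step where I expect to lean hardest on the cited structure theory.
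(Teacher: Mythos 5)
Your skeleton is sound, and it is essentially the shape of the argument in Goren--Oort --- which is all the paper itself offers here: its proof of Lemma~\ref{lemma: intersect} is just the citation of Corollary~2.3.12 of \cite{GO}. The projectivity of the components of $Z_T$ (non-vanishing of the $H^\gerI_\beta$ near the cusps) and the formal descent through the stratification are fine. The genuine gap is exactly the step you flag as the crux: the quasi-affineness of $W_T$, which you propose to extract from ampleness of $\omega^{\kk_T}|_{Z_T}$ with $\kk_T=\sum_{\beta\notin T}\hh_\beta$, is never proved, and the route you sketch would not deliver it. Positive degree on the vertical $\PP^1$'s of the Tian--Xiao fibration does not imply ampleness on the total space (on $\PP^1\times C$ the bundle $\calO_{\PP^1}(1)\boxtimes\calO_C$ has fibre degree $1$ but trivial restriction to horizontal curves); one needs positivity along the base as well, which cannot be read off from Lemma~\ref{lemma: fibres}. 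Ampleness is in fact more than you need --- for your induction it suffices that $\prod_{\beta\notin T}H^\gerI_\beta$ vanish somewhere on each component, and fibre-wise positivity on a single complete fibre would give that --- but even this weaker statement requires the degrees of all the $\omega_\beta$ on fibres of the Tian--Xiao description of $Z_T$ for \emph{every} $T\subsetneq\BB$, since the induction passes through all of them. The paper (Theorem~\ref{theorem: TX}, Lemma~\ref{lemma: fibres}) records these only for $|T|=1$; for larger $T$ the strata are iterated bundles whose line-bundle bookkeeping is precisely the content you are assuming, so the decisive step of the lemma is missing.

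The gap can be closed with tools already in the paper, with no input from \cite{TX}. Suppose a connected component $Y$ of $Z_T$, with $T\neq\BB$ (so $\dim Y=d-|T|\ge 1$), meets no $Z_{T\cup\{\beta\}}$ with $\beta\notin T$. Then for each $\beta\notin T$ the section $H^\gerI_\beta$ is nowhere zero on $Y$, hence trivializes $\omega_\beta^{-1}\otimes\omega^{p}_{\sigma^{-1}\circ\beta}$ there, i.e.\ $\omega_\beta|_Y\cong\omega^{p}_{\sigma^{-1}\circ\beta}|_Y$; for $\beta\in T$, Lemma~\ref{lemma: torsion} gives $\omega_\beta|_Y\cong\omega^{-p}_{\sigma^{-1}\circ\beta}|_Y$. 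Composing these isomorphisms once around each Frobenius orbit $\BB_\gerp$ yields $\omega_\beta|_Y\cong\omega_\beta^{\pm p^{f_\gerp}}|_Y$, so every $\omega_\beta|_Y$ is torsion, and hence so is $\det\omega|_Y=\omega^{\sum_\beta\ee_\beta}|_Y$. But by your own projectivity step $Y$ is a closed subvariety of the minimal compactification, on which $\det\omega$ is ample, so $\det\omega|_Y$ is ample; a torsion ample bundle has a trivial positive power, making $\calO_Y$ ample and the projective $Y$ finite, contradicting $\dim Y\ge 1$. This is essentially Goren--Oort's quasi-affineness argument, and inserting it at your crux completes the induction.
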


\begin{proof} This is Corollary 2.3.12 of \cite{GO}.
\end{proof}

We now recall some results proven in \cite{TX} about the global structure of the strata $Z_T$.

\begin{theorem}\label{theorem: TX}(Tian--Xiao) Let $\gerp|p$ be a prime ideal of $\calO_F$ such that  $f_\gerp>1$. Let $\beta_0 \in \BB_\gerp$. The Goren--Oort stratum $Z_{\{\beta_0\}}$ is isomorphic to a $\PP^1$-bundle over the quaternionic Shimura variety $X^B_K$ associated to $B^\times$, where $B$ is the quaternion algebra over $F$ that is ramified exactly at $\{\sigma^{-1}\circ\beta_0,\beta_0\}$, and $K$ is a level structure maximal at $p$.
\end{theorem}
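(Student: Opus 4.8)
The plan is to construct an explicit morphism $\pi\colon Z_{\{\beta_0\}} \to X^B_K$ and to recognize it as a $\PP^1$-bundle by describing its fibres as the lines in a distinguished two-dimensional Dieudonn\'e block. First I would pin down the local structure of the universal object along the stratum. By definition a point $x = (A,\iota,\lambda,\alpha)$ lies in $W_{\{\beta_0\}}$ exactly when $\DD(\Ker(\Fr_A)\cap\Ker(\Ver_A))_\beta \ne 0$ precisely for $\beta = \beta_0$. Writing $\DD(A) = \oplus_{\beta\in\BB}\DD(A)_\beta$ with each summand free of rank two and with $F,V$ shifting the grading by $\sigma^{\mp 1}$, this condition forces a specific degeneracy of $F$ and $V$ on the block indexed by $\{\sigma^{-1}\circ\beta_0,\beta_0\}$, producing a canonical rank-one ``supersingular'' line there, while the chain stays as \'etale or multiplicative as possible elsewhere. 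The hypothesis $f_\gerp > 1$ is what guarantees $\sigma^{-1}\circ\beta_0 \ne \beta_0$, so that the degeneracy genuinely couples two distinct embeddings rather than collapsing a single one.

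The core of the argument is to convert the $\calO_F$-abelian variety $A$ into an object $A^\flat$ carrying an action of a maximal order in $B$. Concretely I would perform, in families over $Z_{\{\beta_0\}}$, a quasi-isogeny supported at $\gerp$ that alters $A[\gerp^\infty]$ along the degenerate block; the two infinite places $\beta_0,\sigma^{-1}\circ\beta_0$ at which the Dieudonn\'e datum has become supersingular are exactly the places at which $B$ is to ramify, so that the modified object acquires the PEL datum attached to $B^\times$. Transporting the polarization $\lambda$ and the prime-to-$p$ level $\alpha$ along this operation, and recording the level maximal at $p$ dictated by the ambient lattice chain, would yield a family of $B$-type abelian varieties over $Z_{\{\beta_0\}}$ and hence the desired map $\pi$ to $X^B_K$.

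Then I would identify the fibres of $\pi$. Over a fixed point of $X^B_K$, reversing the quasi-isogeny to recover a Hilbert datum $A$ requires a choice of line inside the two-dimensional supersingular block of $\DD$ attached to $\{\sigma^{-1}\circ\beta_0,\beta_0\}$, and the variety of such lines is a $\PP^1$. Verifying that this choice varies algebraically and that distinct lines yield non-isomorphic $A$ would show $\pi$ is a Zariski-locally trivial $\PP^1$-bundle. A dimension count is a reassuring check: since $B$ is split at the remaining $d-2$ infinite places, $\dim X^B_K = d-2$, and adding the one-dimensional fibre recovers $\dim Z_{\{\beta_0\}} = d - 1$.

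The hard part is the global quasi-isogeny construction together with the moduli-theoretic identification of its target. Pointwise, the linear-algebra modification of the Dieudonn\'e module at a supersingular point is classical, but turning it into a morphism of moduli problems over the entire stratum requires that the modified $p$-divisible group and its polarization descend in families, and --- most delicately --- that the functor classifying the modified objects is represented by precisely $X^B_K$ with the asserted level. This last identification is where Honda--Tate theory, the comparison of PEL data for $\GL_2/F$ and for $B^\times$, and the bookkeeping of the lattice chain at $\gerp$ must be combined; matching polarizations across the change of quaternion algebra and controlling the level at $p$ is the subtle step on which the whole argument rests.
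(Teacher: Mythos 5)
Your plan has a genuine gap, and it sits exactly where you locate "the subtle step on which the whole argument rests." The pivot of your construction is that the modified object $A^\flat$, carrying an action of a maximal order in $B$, defines a point of a PEL moduli problem "attached to $B^\times$," and that this moduli problem is represented by $X^B_K$. But $B$ is ramified at the two archimedean places $\beta_0$ and $\sigma^{-1}\circ\beta_0$, and a quaternionic Shimura variety attached to a quaternion algebra ramified at some infinite places is \emph{not} of PEL type: it is not a moduli space of abelian varieties at all. Any naive PEL problem for abelian varieties with $\calO_B$-action runs into the fact that $B\otimes_{F,\beta}\RR$ is the Hamilton quaternions at the two ramified places, so the corresponding archimedean factors of the group are compact modulo centre and contribute zero dimensions; such a moduli space cannot be identified with the $(d-2)$-dimensional variety $X^B_K$. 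In other words, "the comparison of PEL data for $\GL_2/F$ and for $B^\times$" that you invoke cannot be carried out, because the second PEL datum does not exist. This is not bookkeeping; it is the central obstruction, and it is precisely why the actual proof does not proceed this way.

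Note that the paper itself does not prove this theorem: it quotes it from Tian--Xiao \cite{TX} (Theorem 5.2 there, building on Helm \cite{H}), and the route is sketched inside the proof of Lemma~\ref{lemma: fibres}. The resolution of the obstruction above is to choose an auxiliary CM extension of $F$ and pass to unitary Shimura varieties $\mathrm{Sh}_{K'}(G'_\Sigma)$, which \emph{do} admit moduli interpretations even when the underlying quaternion algebra is ramified at infinite places, and whose geometric connected components agree with those of the Hilbert modular variety and of $X^B_K$ respectively. The isogeny construction (a point $\uA$ of the stratum is sent to a pair $(\uB,J)$ via an intermediate abelian scheme $\uC$ and isogenies $\phi_A$, $\phi_B$) and the identification of the fibre as the projectivization of a rank-two de Rham homology block indexed by $\sigma^{-1}\circ\beta_0$ are then carried out entirely on the unitary side, and transferred back componentwise. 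The parts of your sketch concerning the degenerate Dieudonn\'e block, the $\PP^1$ of lines, the dimension count, and the role of $f_\gerp>1$ in ensuring $\sigma^{-1}\circ\beta_0\neq\beta_0$ are faithful in spirit to that argument; but without the unitary detour (or some other device for handling the abelian-type, non-PEL nature of $X^B_K$) your construction of the map $\pi$ and the identification of its target cannot be completed as stated.
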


A much more general version of this result, which is based on an idea of Helm~\cite{H},  is proven in \cite{TX} (see Theorem~5.2), where such a recipe is given for $Z_T$ for an arbitrary subset $T$ of $\BB$. 

\begin{lemma} \label{lemma: torsion}Let $\gerp|p$ be a prime ideal of $\calO_F$, and $\beta_0 \in \BB_\gerp$. Let $T \subset \BB$ be such that $\beta_0 \in T$. We have the following isomorphism of line bundles on $Z_T$:
\[
\omega_{\beta_0}|_{Z_T} \cong \omega^{-p}_{\sigma^{-1}\circ\beta_0}|_{Z_T}
\]
In particular, if $\BB_\gerp \subset T$, then for all $\beta \in \BB_\gerp$ we have $(\omega_\beta|_{Z_T})^{p^{f_\gerp}}\cong \calO_{Z_T}$.
\begin{proof} Clearly, it is enough to prove this for $T=\{\beta_0\}$. Recall that $\calA$ denotes the universal abelian scheme over $\Xbar$. There is a short exact sequence 
\[
0 \ra \omega_{\calA/\Xbar} \ra H^1_{dR} (\calA/\Xbar) \ra \omega^{\vee}_{\calA^\vee/\Xbar} \ra 0.
\]
Isolating the $\beta_0$-component and applying the isomorphism $\omega_{\calA/\Xbar} \cong \omega_{\calA^\vee/\Xbar}$ (induced by a choice of prime-to-$p$ polarization on the universal abelian variety), we deduce that $\wedge^2 H^1_{dR} (\calA/\Xbar)_{\beta_0}$ is the trivial line bundle on $\Xbar$.

Now, we consider the Verschiebung morphism 
\[
\Ver_{\beta_0}:  H^1_{dR} (\calA/\Xbar)_{\beta_0} \ra H^1_{dR} (\calA^{(p)}/\Xbar)_{\beta_0} 
\]
 which has image equal to $(\omega^{(p)})_{\beta_0}=\omega^p_{\sigma^{-1}\circ \beta_0}$. By definition, the vanishing of $H^\gerI_{\beta_0}$ is equivalent to $\omega_{\beta_0}$ equalling the kernel of $\Ver_{\beta_0}$. In particular, on $Z_{\{\beta_0\}}$, we have  a short exact sequence of sheaves
\[
0 \ra \omega_{\beta_0} \ra H^1_{dR} (\calA/\Xbar)_{\beta_0} \ra  \omega^p_{\sigma^{-1}\circ \beta_0}  \ra 0.
\]
This implies that on $Z_{\{\beta_0\}}$ we have $\calO_{Z_{\beta_0}} \cong \wedge^2 H^1_{dR} (\calA/\Xbar)_{\beta_0} \cong \omega_{\beta_0} \otimes \omega_{\sigma^{-1}\circ\beta_0}^p$, as claimed.


The last claim follows by applying the above result for all $\beta \in \BB_\gerp$.

\end{proof}

\end{lemma}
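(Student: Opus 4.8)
The plan is to reduce to the case $T=\{\beta_0\}$ and then read off the isomorphism from the determinant of the first de Rham cohomology restricted to the Goren--Oort stratum. The reduction is immediate: since $\beta_0\in T$, the scheme-theoretic identity $Z_T=\bigcap_{\beta\in T}Z(H^\gerI_\beta)$ recalled from \cite{GO} gives $Z_T\subseteq Z_{\{\beta_0\}}$, so any isomorphism of line bundles proved on $Z_{\{\beta_0\}}$ restricts to $Z_T$.

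Next I would record two facts valid on all of $\Xbar$. From the Hodge exact sequence $0\to\omega_{\calA/\Xbar}\to H^1_{dR}(\calA/\Xbar)\to\omega^\vee_{\calA^\vee/\Xbar}\to 0$, passing to $\beta_0$-components and using a prime-to-$p$ polarization to identify $\omega_{\calA/\Xbar}\cong\omega_{\calA^\vee/\Xbar}$, one finds that the rank-two bundle $H^1_{dR}(\calA/\Xbar)_{\beta_0}$ has trivial determinant. Separately, the $\beta_0$-component of Verschiebung $\Ver_{\beta_0}\colon H^1_{dR}(\calA/\Xbar)_{\beta_0}\to H^1_{dR}(\calA^{(p)}/\Xbar)_{\beta_0}$ has image the Hodge line of the Frobenius twist, namely $(\omega^{(p)})_{\beta_0}\cong\omega^p_{\sigma^{-1}\circ\beta_0}$.

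The geometric heart is the interpretation of $Z_{\{\beta_0\}}$. By construction $H^\gerI_{\beta_0}$ is the composite $\omega_{\beta_0}\hookrightarrow H^1_{dR}(\calA/\Xbar)_{\beta_0}\xrightarrow{\Ver_{\beta_0}}\omega^p_{\sigma^{-1}\circ\beta_0}$, so its vanishing locus is precisely where the Hodge line $\omega_{\beta_0}$ lies in $\ker(\Ver_{\beta_0})$. As $\omega_{\beta_0}$ and $\ker(\Ver_{\beta_0})$ are saturated line subbundles of a rank-two bundle on the nonsingular scheme $Z_{\{\beta_0\}}$, this containment forces equality there, yielding the short exact sequence
\[
0\to\omega_{\beta_0}\to H^1_{dR}(\calA/\Xbar)_{\beta_0}\xrightarrow{\Ver_{\beta_0}}\omega^p_{\sigma^{-1}\circ\beta_0}\to 0
\]
on $Z_{\{\beta_0\}}$. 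Taking determinants and comparing with the triviality above gives $\omega_{\beta_0}\otimes\omega^p_{\sigma^{-1}\circ\beta_0}\cong\calO_{Z_{\{\beta_0\}}}$, which is the asserted isomorphism $\omega_{\beta_0}\cong\omega^{-p}_{\sigma^{-1}\circ\beta_0}$.

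For the last statement, when $\BB_\gerp\subseteq T$ the isomorphism holds for every $\beta\in\BB_\gerp$ simultaneously, and I would iterate $\omega_\beta\cong\omega^{-p}_{\sigma^{-1}\circ\beta}$ around the length-$f_\gerp$ Frobenius orbit of $\beta$ in $\BB_\gerp$. After $f_\gerp$ steps one returns to $\omega_\beta$ and obtains a relation $\omega_\beta^{\otimes m}\cong\calO_{Z_T}$, with $m$ the integer accumulated from the successive factors of $-p$, exhibiting $\omega_\beta|_{Z_T}$ as a torsion line bundle. The step I expect to be most delicate is the middle one: checking that the stratum really is the locus on which the Hodge line coincides with $\ker(\Ver_{\beta_0})$ and that the resulting sequence has locally free terms, so that passing to determinants is legitimate; this rests on the scheme-theoretic equality $Z_{\{\beta_0\}}=Z(H^\gerI_{\beta_0})$ together with the nonsingularity of $Z_{\{\beta_0\}}$.
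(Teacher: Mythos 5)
Your proof is correct and follows essentially the same route as the paper: reduction to $T=\{\beta_0\}$, triviality of $\wedge^2 H^1_{dR}(\calA/\Xbar)_{\beta_0}$ via the Hodge sequence and a prime-to-$p$ polarization, the short exact sequence induced by $\Ver_{\beta_0}$ on $Z_{\{\beta_0\}}$, and taking determinants; your added justifications (the inclusion $Z_T\subseteq Z_{\{\beta_0\}}$, and the saturation argument upgrading $\omega_{\beta_0}\subseteq\ker(\Ver_{\beta_0})$ to equality) merely make explicit what the paper treats as immediate. One remark: your iteration around the Frobenius orbit yields $\omega_\beta^{\otimes m}\cong\calO_{Z_T}$ with $m=(-p)^{f_\gerp}-1$, i.e.\ torsion of order dividing $p^{f_\gerp}-(-1)^{f_\gerp}$, not the exponent $p^{f_\gerp}$ literally stated in the lemma --- but the paper's own one-line iteration gives exactly the same exponent, so the stated $p^{f_\gerp}$ appears to be a slip, and only the torsion property (order prime to nothing in particular) is what gets used in the proof of the main theorem.
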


\begin{lemma}\label{lemma: fibres} Let $\beta_0 \in \BB_\gerp$ be such that $f_\gerp>1$. By Theorem \ref{theorem: TX}, the Goren--Oort stratum $Z_{\{\beta_0\}}\subset \Xbar$ is isomorphic to a $\PP^1$-bundle over a quaternionic Shimura variety $X^B_K$. Let $x$ be a geometric point of $X^B_K$, and denote by $\PP^1_x$ the corresponding fibre of the $\PP^1$-bundle. Then, for $\beta\in\BB$, we have the following equalities:
 $$\omega_\beta|_{\PP^1_x} \cong \begin{cases} 
   \calO_{\PP^1_x}(p)& \mbox{if $\beta=\beta_0$,} \\
  \calO_{\PP^1_x}(-1) & \mbox{if $\beta=\sigma^{-1}\circ\beta_0$,}\\
 \calO_{\PP^1_x} & \mbox{otherwise.}
  \end{cases}$$
\end{lemma}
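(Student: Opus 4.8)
The plan is to fix a geometric point $x$ of $X^B_K$ and exploit that every line bundle on $\PP^1_x\cong\PP^1$ is $\calO_{\PP^1_x}(d)$ for a unique $d\in\ZZ$; thus I would write $\omega_\beta|_{\PP^1_x}\cong\calO_{\PP^1_x}(d_\beta)$ and reduce the lemma to computing the integers $d_\beta$ (which are in any case independent of $x$, being degrees of line bundles in a flat family of curves). One relation is immediate: restricting the isomorphism $\omega_{\beta_0}\cong\omega_{\sigma^{-1}\circ\beta_0}^{-p}$ of Lemma~\ref{lemma: torsion} to $\PP^1_x$ gives $d_{\beta_0}=-p\,d_{\sigma^{-1}\circ\beta_0}$, so it will suffice to show that $d_{\sigma^{-1}\circ\beta_0}=-1$ and that all the remaining $d_\beta$ vanish.

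First I would dispose of the bundles $\omega_\beta$ with $\beta\neq\beta_0,\sigma^{-1}\circ\beta_0$. Here I would appeal to the fine structure of the Tian--Xiao isomorphism of Theorem~\ref{theorem: TX}: the modification relating the universal abelian scheme on $Z_{\{\beta_0\}}$ to the one pulled back along $\pi\colon Z_{\{\beta_0\}}\to X^B_K$ is concentrated in the two components indexed by the places $\beta_0$ and $\sigma^{-1}\circ\beta_0$ at which $B$ ramifies, so that for every other $\beta$ the line bundle $\omega_\beta$ is isomorphic to the $\pi$-pullback of the corresponding automorphic bundle on $X^B_K$. Any such pullback is trivial on the fibre $\PP^1_x$, giving $d_\beta=0$ for $\beta\neq\beta_0,\sigma^{-1}\circ\beta_0$. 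This is the step I expect to be the main obstacle, since it is exactly where one must use more than the abstract assertion that $Z_{\{\beta_0\}}$ is a $\PP^1$-bundle: one needs the explicit dictionary between the Hodge bundles on the stratum and those on the quaternionic Shimura variety, and in particular that the $\gerp$-quasi-isogeny underlying the construction leaves the other $\gerp$-components untouched.

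It then remains to pin down the single integer $d_{\sigma^{-1}\circ\beta_0}$, and for this I would avoid any further appeal to the tautological structure and instead run a Kodaira--Spencer/adjunction computation on the fibre. By the Kodaira--Spencer isomorphism $\Omega^1_{\Xbar}\cong\bigoplus_{\beta\in\BB}\omega_\beta^{\otimes 2}$ one has $\deg(\Omega^1_{\Xbar}|_{\PP^1_x})=2\sum_\beta d_\beta$. Since $Z_{\{\beta_0\}}=Z(H^\gerI_{\beta_0})$ is a smooth divisor in $\Xbar$ with $\calO_{\Xbar}(Z_{\{\beta_0\}})\cong\omega^{\hh_{\beta_0}}=\omega_{\sigma^{-1}\circ\beta_0}^{p}\otimes\omega_{\beta_0}^{-1}$, its conormal bundle restricts on $\PP^1_x$ to a line bundle of degree $d_{\beta_0}-p\,d_{\sigma^{-1}\circ\beta_0}=2d_{\beta_0}$ (using the relation from Lemma~\ref{lemma: torsion}); feeding this into the conormal exact sequence $0\to N^\vee_{Z_{\{\beta_0\}}/\Xbar}\to\Omega^1_{\Xbar}|_{Z_{\{\beta_0\}}}\to\Omega^1_{Z_{\{\beta_0\}}}\to 0$ yields $\deg(\Omega^1_{Z_{\{\beta_0\}}}|_{\PP^1_x})=2\sum_{\beta\neq\beta_0}d_\beta$. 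On the other hand, since $\pi$ is a smooth $\PP^1$-fibration, the relative cotangent sequence gives $\deg(\Omega^1_{Z_{\{\beta_0\}}}|_{\PP^1_x})=\deg\Omega^1_{\PP^1_x}=-2$. Comparing the two expressions and invoking the vanishing $d_\beta=0$ for $\beta\neq\beta_0,\sigma^{-1}\circ\beta_0$ established above gives $d_{\sigma^{-1}\circ\beta_0}=-1$, and then $d_{\beta_0}=-p\,d_{\sigma^{-1}\circ\beta_0}=p$ finishes the proof. (Alternatively, one may read $d_{\sigma^{-1}\circ\beta_0}=-1$ directly off the Tian--Xiao model by identifying $\omega_{\sigma^{-1}\circ\beta_0}$ with the tautological subbundle of the $\PP^1$-bundle; the Kodaira--Spencer route has the advantage of using only the abstract fibration together with the divisor class of $Z_{\{\beta_0\}}$.)
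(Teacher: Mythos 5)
Your proof is correct, and it takes a genuinely different route from the paper's at the key numerical step, so a comparison is worthwhile (writing $\omega_\beta|_{\PP^1_x}\cong\calO_{\PP^1_x}(d_\beta)$ as you do). Two of your three steps coincide with the paper's: the relation $d_{\beta_0}=-p\,d_{\sigma^{-1}\circ\beta_0}$ comes from Lemma \ref{lemma: torsion} in both arguments, and the triviality of $\omega_\beta|_{\PP^1_x}$ for $\beta\notin\{\beta_0,\sigma^{-1}\circ\beta_0\}$ is proved in the paper exactly as you anticipate, by the explicit Tian--Xiao dictionary (passing through unitary Shimura varieties): away from the modified places, $H^{\rm dR}_1(B/S)^\circ_{\tilde{\beta}}$ is canonically isomorphic to $H^{\rm dR}_1(A/S)^\circ_{\tilde{\beta}}$ and the Hodge line there is determined by the base point, so $\omega_\beta$ is a pullback from $X^B_K$. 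Where you diverge is in computing $d_{\sigma^{-1}\circ\beta_0}$: the paper reads off from that same dictionary that the fibre is the projectivization of $H^{\rm dR}_1(B/S)^\circ_{\sigma^{-1}\circ\tilde{\beta}_0}$ with $\omega_{\sigma^{-1}\circ\beta_0}$ the tautological subbundle, hence of degree $-1$ (this is precisely your parenthetical alternative), whereas you force $d_{\sigma^{-1}\circ\beta_0}=-1$ intrinsically from the Kodaira--Spencer isomorphism, the divisor relation $\calO_{\Xbar}(Z_{\{\beta_0\}})\cong\omega^{\hh_{\beta_0}}$, the conormal sequence, and the relative cotangent sequence of the fibration. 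Your computation is sound, and its extra inputs are available: $Z_{\{\beta_0\}}=Z(H^{\gerI}_{\beta_0})$ scheme-theoretically and is nonsingular by the Goren--Oort properties quoted in \S\ref{section: GO} (which justifies the divisor relation and the conormal sequence), the smoothness of $X^B_K$ follows from that of $Z_{\{\beta_0\}}$ and of $\pi$, and Kodaira--Spencer for Hilbert modular varieties is standard (it is in Andreatta--Goren, though this paper never invokes it); the arithmetic then checks out, since $2\sum_{\beta\neq\beta_0}d_\beta=-2$ forces $d_{\sigma^{-1}\circ\beta_0}=-1$ and $d_{\beta_0}=p$. What your route buys: once the remaining $\omega_\beta$ are known to be pulled back, the two nontrivial degrees are forced by abstract geometry, so one never has to identify which line in Tian--Xiao's $\PP^1$-bundle is tautological. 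What it costs: an additional (standard) input, and no real reduction in the reliance on Tian--Xiao, since Kodaira--Spencer yields only the single relation $\sum_{\beta\neq\beta_0}d_\beta=-1$; as you yourself note, the pullback step remains indispensable in both proofs. One minor imprecision in your sketch, harmless to the conclusion: in Tian--Xiao the isogeny modification is concentrated at the lifts of $\beta_0$ alone, not at both ramified places; at $\sigma^{-1}\circ\beta_0$ the de Rham homology is untouched, and what varies along the fibre is the Hodge line inside it --- which is exactly why $\omega_{\sigma^{-1}\circ\beta_0}$ is tautological rather than pulled back.
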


\begin{proof} This is contained in \cite{TX}, but we sketch the proof for the convenience of the reader.  The construction of the isomorphism between  $Z_{\{\beta_0\}}$  and the  $\PP^1$-bundle over $X^B_K$  is given in \cite[5.11-5.24]{TX}. It is first shown that the connected components of Hilbert modular varieties and quaternionic Shimura varieties are naturally isomorphic with connected components of certain unitary Shimura varieties (respecting the Goren-Oort stratifications). Hence, it is enough to prove the above statement over appropriate unitary Shimura varieties. For the rest of this proof, we will use notation directly from \cite{TX}. To be more precise, it is enough to prove the statement with $Z_{\{\beta_0\}} \subset \Xbar$  replaced with ${\rm Sh}_{K'}({G'_\emptyset})_{ k_0,\{   \tilde{\beta}_0 \} } \subset  {\rm Sh}_{K'}({G'_\emptyset})_{k_0}$,  $X^B_K$ replaced with ${\rm Sh}_{K'}({G'_{\{\sigma^{-1}\circ\tilde{\beta}_0,\tilde{\beta}_0 \}})}_{k_0}$, and $\omega_\beta$ replaced by $\omega^\circ_{{\bf{\rm A}}^\vee/S,\tilde{\beta}}$. The fibre of the $\PP^1$-bundle over ${\rm Sh}_{K'}({G'_{\{\sigma^{-1}\circ\tilde{\beta}_0,\tilde{\beta}_0 \}})}_{k_0}$ at a point $\uB/S$ on ${\rm Sh}_{K'}({G'_{\{\sigma^{-1}\circ\tilde{\beta}_0,\tilde{\beta}_0 \}})}_{k_0}$ (where $\uB$ denotes an abelian scheme $B/S$ with PEL structure) is the projectivization of $H^{\rm dR}_1(B/S)^\circ_{\sigma^{-1}\circ\tilde{\beta}_0}$. In \cite[\S 5.15]{TX}, for any point $\uA/S$ on ${\rm Sh}_{K'}({G'_\emptyset})_{k_0,\{\tilde{\beta}_0\}}$, abelian schemes $\uC$ and $\uB$ along with isogenies $\phi_A: A \ra C$, and $\phi_B: B \ra C$ are constructed such that $\uB/S$ is a point on  ${\rm Sh}_{K'}({G'_{\{\sigma^{-1}\circ\tilde{\beta}_0,\tilde{\beta}_0 \}})}_{k_0}$. This data satisfies several properties including that $\phi_{A,*,\sigma^{-1}\circ\tilde{\beta}_0}$ and $\phi_{B,*,\sigma^{-1}\circ\tilde{\beta}_0}$ are isomorphisms. The isomorphism between ${\rm Sh}_{K'}({G'_\emptyset})_{ k_0,\{   \tilde{\beta}_0 \} }$ and the $\PP^1$-bundle over ${\rm Sh}_{K'}({G'_{\{\sigma^{-1}\circ\tilde{\beta}_0,\tilde{\beta}_0 \}})}_{k_0}$ sends $\uA/S$ to $(\uB,J)$, where $J$ is the line in $H^{\rm dR}_1(B/S)^\circ_{\sigma^{-1}\circ\tilde{\beta}_0}$ given by $J=\phi_{B,*,\sigma^{-1}\circ\tilde{\beta}_0}^{-1}\phi_{A,*,\sigma^{-1}\circ\tilde{\beta}_0}(\omega^\circ_{A^\vee/S,\sigma^{-1}\circ \tilde{\beta_0}})$. This immediately implies that the restriction of $\omega^\circ_{A^\vee/S,\sigma^{-1}\circ \tilde{\beta_0}}$ to each fibre of the $\PP^1$-bundle is isomorphic to $\calO(-1)$ on $\PP^1$. Applying Lemma \ref{lemma: torsion}, it follows that  the restriction of $\omega_{\beta_0}$ to each fibre of the corresponding $\PP^1$-bundle is isomorphic to $\calO(p)$.

To prove the statement for $\beta\not\in \{\sigma^{-1}\circ\beta_0,\beta_0\}$, it is enough to note that, by construction (see proof of Lemma 5.18 in \cite{TX}), $H^{\rm dR}_1(B/S)^\circ_{\tilde{\beta}}$ is canonically isomorphic to  $H^{\rm dR}_1(A/S)^\circ_{\tilde{\beta}}$ since $\tilde{\beta} \not\in \tilde{\Delta}(\{\beta_0\})^+ \cup  \tilde{\Delta}(\{\beta_0\})^-=\{\tilde{\beta_0},\tilde{\beta_0}^c\}$. This shows that  $\omega^\circ_{A^\vee/S,\tilde{\beta}}$ depends canonically on $\uB$, that  is, the isomorphic image of $\omega^\circ_{{\bf{\rm A}}^\vee/S,\tilde{\beta}}$   on the $\PP^1$-bundle is a pullback from the base, and hence has trivial fibres.

\end{proof}

\section{The Main Theorem}

In this section, we prove our main theorem and discuss some of its consequences.

\begin{theorem}  \label{thm:key}  Suppose that $\kk\! =\! \sum k_\beta\ee_\beta \in \ZZ^\BBB$, and that
$\beta_0\in\BBB$ is such that $p k_{\beta_0} < k_{\sigma^{-1}\circ{\beta_0}}$ for some $\beta_0 \in \BBB$.  Then, multiplication by the
partial Hasse invariant $H^\gerI_{\beta_0}$ induces an isomorphism:
$$M^\gI_{\kk-\hh_{\beta_0}}(\gn,\Fpbar) \,\,\,\stackrel{\sim}{\lra} \,\,\, M^\gI_{\kk}(\gn, \Fpbar).$$
If $f_\gerp=1$, then a stronger result holds: we have $M^\gI_{\kk}(\gn, \Fpbar)=0$ for all $\kk$ for which $k_{\beta_0}<0$.
\end{theorem}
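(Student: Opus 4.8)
Write-up of the plan for the final assertion (the case $f_\gerp = 1$).

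\medskip

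\emph{Setup and why a new idea is needed.} Here $\gerp$ is the prime with $\beta_0\in\BB_\gerp$, and $f_\gerp=1$ means $\BB_\gerp=\{\beta_0\}$, so $\sigma^{-1}\circ\beta_0=\beta_0$ and $\hh_{\beta_0}=(p-1)\ee_{\beta_0}\ge 0$. Thus $H^\gerI_{\beta_0}$ is a genuine (positive–weight) partial Hasse invariant: a section of the \emph{line bundle} $\omega_{\beta_0}^{\,p-1}$ whose reduced zero divisor is $Z:=Z_{\{\beta_0\}}$, since $Z_{\{\beta_0\}}=Z(H^\gerI_{\beta_0})$ scheme–theoretically. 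Note that the cone inequality $p k_{\beta_0}<k_{\sigma^{-1}\circ\beta_0}$ collapses to $k_{\beta_0}<0$. The first difficulty is that the Tian--Xiao description (Theorem~\ref{theorem: TX}) is unavailable: there is no quaternion algebra over $F$ ramified at the single archimedean place $\beta_0$, so $Z$ is not a $\PP^1$–bundle and the fibrewise negative–degree argument used when $f_\gerp>1$ has no analogue. A direct vanishing statement must therefore be proved.

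\medskip

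\emph{Main approach: relative positivity in the $\gerp$–direction.} The plan is to exploit that, for a degree–one prime, the $\gerp$–divisible group of the universal abelian variety is one–dimensional of height two, so that in the $\beta_0$–direction $\Xbar$ behaves like a modular curve, with $\omega_{\beta_0}$ playing the role of the modular line bundle and $H^\gerI_{\beta_0}$ that of the classical Hasse invariant. Concretely, I would pass to a toroidal compactification of $\Xbar$---legitimate because $d>1$, so that by the Koecher principle every section of $\omega^\kk$ extends and $M^\gI_\kk(\gn,\Fpbar)=H^0$ of the extended bundle---and then show that along the complete ``modular'' curves $C$ in the $\gerp$–direction one has $\deg(\omega_{\beta_0}|_C)>0$ while $\omega_\beta|_C\cong\calO_C$ for every $\beta\neq\beta_0$ (the latter because those bundles are pulled back from the locus where only the $\gerp$–structure varies). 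Granting this, for $k_{\beta_0}<0$ the restriction $\omega^\kk|_C$ has strictly negative degree on each such proper curve, hence no nonzero sections; as these curves sweep out $\Xbar$ (equivalently, pushing forward along the corresponding fibration kills $\omega^\kk$ fibrewise), one concludes $H^0(\Xbar,\omega^\kk)=0$, i.e.\ $M^\gI_\kk(\gn,\Fpbar)=0$.

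\medskip

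\emph{Alternative packaging and the main obstacle.} There is a tempting ``infinite divisibility'' reformulation: if one knew that every $f$ with $k_{\beta_0}<0$ restricts to $0$ on $Z$, then $f=H^\gerI_{\beta_0}\cdot f_1$ with $f_1$ of weight $\kk-(p-1)\ee_{\beta_0}$, whose $\beta_0$–component is again negative; iterating would give $f$ divisible by $(H^\gerI_{\beta_0})^n$ for all $n$, and since $\Xbar$ is smooth (so its connected and irreducible components coincide) and every such component meets $Z\supseteq Z_\BB$ by Lemma~\ref{lemma: intersect} while $Z$ is a proper closed subset, this would force $f=0$. The obstacle is precisely the input ``$f|_Z=0$'': Lemma~\ref{lemma: torsion} gives $\omega_{\beta_0}^{\,p+1}|_Z\cong\calO_Z$, so $\omega_{\beta_0}|_Z$ is \emph{torsion} rather than negative, and $f|_Z$ need not vanish for any Picard–theoretic reason. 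I therefore expect the crux to be the genuine modular geometry of the previous paragraph---establishing the complete curves in the $\gerp$–direction together with the positivity $\deg(\omega_{\beta_0}|_C)>0$ and triviality of the other $\omega_\beta$ there (and their properness, via compactification). This is the step that genuinely uses $f_\gerp=1$ and is the reason the degree–one case is both separated out and stronger than the mere isomorphism.
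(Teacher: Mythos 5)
Your proposal addresses only the final assertion (the case $f_\gerp=1$), and even there it has a genuine gap: the entire weight of the argument rests on the unproved claim that $\Xbar$ (or its toroidal compactification) is swept out by complete curves $C$ ``in the $\gerp$-direction'' with $\deg(\omega_{\beta_0}|_C)>0$ and $\omega_\beta|_C\cong\calO_C$ for all $\beta\neq\beta_0$. No such family exists, or at least nothing remotely standard produces one. The ``$\gerp$-direction'' exists on $\Xbar$ only as a rank-one distribution inside the tangent bundle (the $\beta_0$-component under Kodaira--Spencer); its leaves are formal, not algebraic --- already over $\CC$ the analogous leaves, images of $\uhp\times\{\mathrm{pt}\}$, are not algebraic curves, and nothing special to characteristic $p$ makes them algebraic, let alone complete. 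The complete curves that provably do have your degree behaviour are precisely the components of the Goren--Oort stratum $Z_{\BB\setminus\{\beta_0\}}$ (positive degree for $\omega_{\beta_0}$ by Lemma~\ref{lemma: intersect}, torsion for the other $\omega_\beta$ by Lemma~\ref{lemma: torsion}), and these lie in a one-dimensional closed subset, nowhere near sweeping out $\Xbar$. Worse, your hypothetical curves would have to be extraordinarily special: if $C$ is complete, avoids the boundary, meets the ordinary locus, and satisfies $\deg(\omega_\beta|_C)=0$ for all $\beta\neq\beta_0$, then for each such $\beta$ the restriction $H^\gerI_\beta|_C$ is a section of the degree-zero bundle $(\omega^p_{\sigma^{-1}\circ\beta}\otimes\omega_\beta^{-1})|_C$ (here one uses $f_\gerp=1$, so $\sigma^{-1}\circ\beta\neq\beta_0$), hence nowhere vanishing; so $C$ must be entirely disjoint from every $Z_{\{\beta\}}$ with $\beta\neq\beta_0$. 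Producing even one such curve through an ordinary point means producing a non-isotrivial complete family of $\gerI$-polarized abelian varieties whose $\gerq$-divisible groups remain ordinary for every $\gerq\neq\gerp$; this is not known to be possible and is expected to be impossible. (Also note that the ordinary locus of the minimal compactification is affine, being the non-vanishing locus of the total Hasse invariant, a section of an ample power of $\det\omega$; so your curves could never stay in the ordinary locus --- each must hit $Z_{\{\beta_0\}}$.) In short, the main approach assumes the hardest geometric input, and your ``alternative packaging'' honestly records the resulting hole: the input $f|_{Z_{\{\beta_0\}}}=0$ is never supplied.

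For comparison, the paper closes exactly the gap you flagged, but by an induction that never divides by $H^\gerI_{\beta_0}$ on $\Xbar$ and never needs sweeping curves. Writing $\BB=\{\beta_0,\beta_1,\dots,\beta_{d-1}\}$ and $T_i=\BB\setminus\{\beta_0,\dots,\beta_i\}$, one proves $H^0(Z_{T_i},\omega^\kk)=0$ by induction on $i$. The base case is your degree argument carried out where it is actually valid: on the one-dimensional stratum $Z_{T_0}$ every connected component $W$ is projective and contains a superspecial point (Lemma~\ref{lemma: intersect}), so $\deg(\omega_{\beta_0}|_W)>0$, while the remaining $\omega_{\beta_i}$ are torsion there (Lemma~\ref{lemma: torsion}); hence $\deg(\omega^\kk|_W)=k_{\beta_0}\deg(\omega_{\beta_0}|_W)<0$ and sections vanish. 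For the step, one uses the \emph{other} partial Hasse invariants: the vanishing locus of $H^\gerI_{\beta_i}$ on $Z_{T_i}$ is $Z_{T_{i-1}}$, so the inductive hypothesis makes every section of $\omega^\kk|_{Z_{T_i}}$ divisible by $H^\gerI_{\beta_i}$; the quotient's weight $\kk-\hh_{\beta_i}$ still has negative $\beta_0$-component (again because $\sigma^{-1}\circ\beta_i\neq\beta_0$ when $f_\gerp=1$), so iterating yields divisibility by $(H^\gerI_{\beta_i})^N$ for all $N$, which forces vanishing since $Z_{T_{i-1}}$ is a nonempty proper subset of each irreducible component of $Z_{T_i}$. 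Taking $i=d-1$ gives $H^0(\Xbar,\omega^\kk)=0$. The idea you were missing is thus to restrict to the flag of Goren--Oort strata and trade the inaccessible divisibility by $H^\gerI_{\beta_0}$ for divisibility by the $H^\gerI_{\beta_i}$, $i\geq 1$, with the induction supplying the vanishing-on-the-divisor input at each stage.
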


\begin{proof}  Let $\gerp$ be a prime ideal of $\calO_F$ such that $\beta_0 \in \BB_\gerp$. We distinguish two cases.

\

Case(1): $f_\gerp>1$.  Consider the short exact sequence 
\[
0 \ra \omega^{\kk-\hh_{\beta_0}} \ra \omega^\kk \ra \calF \ra 0
\]
of sheaves on $\Xbar$, where the first map is multiplication by $H^\gerI_{\beta_0}$. To prove the result, it  is enough to show that $H^0(\Xbar,\calF)$ vanishes. Clearly, $\calF$ is supported on the vanishing locus of $H^\gerI_{\beta_0}$, i.e., $Z_{\{\beta_0\}}$, and we have $H^0(\Xbar,\calF)=H^0(Z_{\{\beta_0\}}, \omega^\kk)$. We show that the latter vanishes.  

By Theorem \ref{theorem: TX}, the closed Goren--Oort stratum $Z_{\{\beta_0\}}$ is isomorphic to a $\PP^1$-bundle over a quaternionic Shimura variety $X^B_K$. Let $s \in H^0(Z_{\{\beta_0\}}, \omega^\kk)$.   Let $x$ be a geometric point of $X^B_K$, and denote by $\PP^1_x$ the corresponding fibre of the $\PP^1$-bundle. By Lemma \ref{lemma: fibres}, we have 
\[
\omega^{\kk}|_{\PP^1_x}\cong \calO(p k_{\beta_0} - k_{\sigma^{-1}\circ{\beta_0}}).
\]
By assumption $p k_{\beta_0} -k_{\sigma^{-1}\circ{\beta_0}} <0$, so it follows that  $H^0(\PP^1_x,\omega^\kk|_{\PP^1_x})=0$. This shows that $s|_{\PP^1_x}=0$ for all geometric points $x$ of $X^B_K$. Hence $s=0$ as claimed.

\

Case(2): $f_\gerp=1$. Here the assumption becomes $k_{\beta_0}<0$. We prove the stronger assertion that 
\[
H^0(\Xbar,\omega^\kk)=0,
\]
if $k_{\beta_0}<0$. This will imply $M^\gI_{\kk-\hh_{\beta_0}}(\gn,\Fpbar)=M^\gI_{\kk}(\gn, \Fpbar)=0$.

Write $\BB=\{\beta_0,\beta_1,...,\beta_{d-1}\}$, and, for $0 \leq i \leq d-1$, let  $T_i:=\BB-\{\beta_0,...,\beta_i\}$.  We prove by  induction on $i$ that 
\[
H^0(Z_{T_i},\omega^\kk)=0,
\]
for all $0 \leq i \leq d-1$. 

We first prove the result for $i=0$.  By properties of the Goren--Oort stratification recalled in \S \ref{section: GO}, the closed stratum $Z_{T_{0}}$ is equi-dimensional of dimension $1$.  Let $W$ be an arbitrary connected component of $Z_{T_0}$. By Lemma \ref{lemma: intersect}, $W$ has a superspecial point on it. In particular, $H^\gerI_{\beta_0}$ has a nonempty vanishing 
locus on $W$. This  implies that $\deg(\omega_{\beta_0}|_{W})>0$ on every connected component $W$ of $Z_{T_{0}}$. By the second statement in Lemma \ref{lemma: torsion}, we find that for all $1\leq i \leq d-1$, the line bundle $\omega_{\beta_i}$ restricts to a torsion line bundle on $Z_{T_0}$. Hence, on every connected component $W$ of  $Z_{T_0}$ we have 
 \[
 \deg(\omega^\kk|_{W})=k_{\beta_0}\deg(\omega_{\beta_0}|_{W})<0.
 \]
 Since every such connected component is projective, this proves that $H^0(Z_{T_{0}},\omega^\kk)=0$. 
 
 Now we prove the induction step. Assume that $1 \le i \le d-1$, and that  the result holds for $i -1$. We prove that it holds also for $i$. Consider the short exact sequence 
 \[
 0 \ra \omega^{\kk-\hh_{\beta_i}}|_{Z_{T_{i}}}\ra \omega^\kk|_{Z_{T_{i}}} \ra \calF \ra 0
 \]
 of sheaves on $Z_{T_{i}}$, where the first map is multiplication by $H^\gerI_{\beta_i}|_{Z_{T_{i}}}$. The sheaf $\calF$ is supported on the vanishing locus of $H^\gerI_{\beta_i}$ on $Z_{T_{i}}$, which is $Z_{T_{i-1}}$, and we have $H^0(Z_{T_{i}},\calF)=H^0(Z_{T_{i-1}}, \omega^\kk)$=0, by the induction assumption. This shows that multiplication by $H^\gerI_{\beta_i}|_{Z_{T_{i}}}$ induces an isomorphism
  \[
 H^0(Z_{T_{i}}, \omega^{\kk-\hh_{\beta_i}}) \,\,\,\stackrel{\sim}{\lra} \,\,\, H^0({Z_{T_{i}}},\omega^\kk).
 \]
 Since the $\beta_0$-component of $\kk-\hh_{\beta_i}$ is negative, we can repeat the argument  any number of times and deduce that for all $N>0$, multiplication by  $(H^\gerI_{\beta_i})^N|_{Z_{T_{i}}}$ induces an isomorphism
 \[
 H^0(Z_{T_{i}}, \omega^{\kk-N\hh_{\beta_i}}) \,\,\,\stackrel{\sim}{\lra} \,\,\, H^0({Z_{T_{i}}},\omega^\kk).
 \]
 By Lemma \ref{lemma: intersect}, $H^\gerI_{\beta_i}$ has a nonempty vanishing locus on any connected component of $Z_{T_{i}}$. This, along with the fact that every connected component of $Z_{T_{i}}$ is irreducible, implies that any nonzero section of $\omega^\kk$ on $Z_{T_{i}}$ is divisible by at most a finite power of $H^\gerI_{\beta_i}|_{Z_{T_{i}}}$. We therefore conclude that $H^0({Z_{T_{i}}},\omega^\kk)=0$. This completes the proof by induction. In particular, for $i=d-1$,  we get 
 \[
 H^0(Z_{T_{d-1}},\omega^\kk)=H^0(\Xbar,\omega^\kk)=0.
 \]
 
 This ends the proof of our main theorem.

\end{proof}

We note the following geometric interpretation of Theorem~\ref{thm:key}:  if $\kk\! =\! \sum k_\beta\ee_\beta \in \ZZ^\BBB$ is such that
$p k_{\beta_0} < k_{\sigma^{-1}\circ{\beta_0}}$, then every Hilbert modular form of weight $\kk$ vanishes on $Z_{\{\beta_0\}}$.

We now prove some interesting consequences of Theorem~\ref{thm:key}. First, we define three cones in $\QQ^\BB$.

\begin{definition}

We define the {\em minimal, standard,} and {\em Hasse} cones as follows
$$C^{\min}  = \left\{\left.\, \sum_\beta x_\beta \ee_\beta \in \QQ^\BBB\, \,\right|\, \, p x_\beta \ge x_{\sigma^{-1}\circ\beta}\,\ \mbox{for all $\beta\in \BBB$}\,\right\},$$
$$C^{\rm st}=\left\{\left.\, \sum_\beta x_\beta \ee_\beta \in \QQ^\BBB\, \,\right|\, \,   x_\beta \ge 0\,\ \mbox{for all $\beta\in \BBB$}\,\right\},$$

$$C^{\Hasse}  = \left\{\left.\, \sum_\beta y_\beta \hh_\beta \in \QQ^\BBB \,\,\right|\, \, y_\beta \ge 0 \,\ \mbox{for all $\beta\in \BBB$}\,\right\}.$$

\end{definition}
It is easily seen that we always have containments  
$$C^{\min} \subset C^{\rm st} \subset C^{\Hasse},$$ each of which is an equality if and only if $p$ splits completely in $F$.

In \cite[15.8]{AG}, Andreatta and Goren ask the following question: Over the complex numbers, a non-zero Hilbert modular form has weight in $C^{\rm st}$  . In characteristic $p$ this is no longer true as the example of the partial Hasse invariant shows. Note though that the filtration of a partial Hasse invariant is the trivial character  (weight $(0,...,0)$) that lies in $C^{\rm st}$. We therefore ask: is there an example of a modular form whose filtration is not in $C^{\rm st}$? is not in $C^{\Hasse}$?

As a corollary to our main theorem, we can answer that there are no such examples, and in fact the following stronger statement is true:

\begin{corollary}  \label{corollary: filtration}  If $f$ is a non-zero form in $M^\gI_\kk(\gn,\Fpbar)$ for some
$\kk \in \ZZ^\BBB$, then $\Phi(f) \in C^{\min}$.
\end{corollary}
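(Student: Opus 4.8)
The plan is to deduce this directly from Theorem~\ref{thm:key} by an argument by contradiction that exploits the maximality built into the definition of $\Phi(f)$. Write $\Phi(f) = \sum_\beta m_\beta \ee_\beta$, and let $\sum_\beta n_\beta \ee_\beta \in \ZZ_{\ge 0}^\BBB$ be the maximal tuple furnished by Andreatta--Goren, so that $f = g\prod_\beta (H^\gerI_\beta)^{n_\beta}$ with $g \in M^\gI_{\Phi(f)}(\gn,\Fpbar)$. Since $f = g\prod_\beta (H^\gerI_\beta)^{n_\beta} \neq 0$, necessarily $g \neq 0$. Suppose, for contradiction, that $\Phi(f) \notin C^{\min}$; then there is some $\beta_0 \in \BBB$ with $p\,m_{\beta_0} < m_{\sigma^{-1}\circ\beta_0}$. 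Let $\gerp$ be the prime with $\beta_0 \in \BB_\gerp$.

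The heart of the argument is to apply Theorem~\ref{thm:key} with the weight $\kk$ taken to be $\Phi(f)$ and the distinguished embedding taken to be $\beta_0$; the hypothesis $p\,m_{\beta_0} < m_{\sigma^{-1}\circ\beta_0}$ is precisely the hypothesis of that theorem. If $f_\gerp > 1$, the theorem asserts that multiplication by $H^\gerI_{\beta_0}$ is an isomorphism $M^\gI_{\Phi(f)-\hh_{\beta_0}}(\gn,\Fpbar) \stackrel{\sim}{\lra} M^\gI_{\Phi(f)}(\gn,\Fpbar)$. In particular $g$ lies in its image, so $g = H^\gerI_{\beta_0}\,g'$ for some $g' \in M^\gI_{\Phi(f)-\hh_{\beta_0}}(\gn,\Fpbar)$. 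Substituting gives $f = g'\,(H^\gerI_{\beta_0})^{n_{\beta_0}+1}\prod_{\beta \neq \beta_0}(H^\gerI_\beta)^{n_\beta}$. The exponent tuple $(n_\beta + \delta_{\beta,\beta_0})_\beta$ still lies in $\ZZ_{\ge 0}^\BBB$ and strictly dominates $(n_\beta)_\beta$, and $g'$ is a valid witness for it (its weight is $\Phi(f)-\hh_{\beta_0} = \kk - \sum_\beta(n_\beta+\delta_{\beta,\beta_0})\hh_\beta$); this contradicts the maximality defining $\Phi(f)$.

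It remains to dispose of the case $f_\gerp = 1$. Here $\sigma^{-1}\circ\beta_0 = \beta_0$, so the inequality $p\,m_{\beta_0} < m_{\sigma^{-1}\circ\beta_0}$ collapses to $m_{\beta_0} < 0$. The stronger assertion of Theorem~\ref{thm:key} in this case yields $M^\gI_{\Phi(f)}(\gn,\Fpbar) = 0$, forcing $g = 0$, again a contradiction. In both cases the assumption $\Phi(f) \notin C^{\min}$ is untenable, so $\Phi(f) \in C^{\min}$.

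I expect no serious obstacle: the entire analytic and geometric content has already been packaged into Theorem~\ref{thm:key}, and the corollary is a purely formal consequence. The only points demanding care are the bookkeeping in the maximality argument---checking that the enlarged exponent tuple genuinely strictly dominates the original one while remaining in $\ZZ_{\ge 0}^\BBB$, and that its weight matches that of the witness $g'$---together with the elementary observation that $g \neq 0$, which is what rules out the vacuous situation in the split case.
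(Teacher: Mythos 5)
Your proof is correct and follows essentially the same route as the paper: both deduce the corollary from Theorem~\ref{thm:key} via the maximality in the definition of $\Phi(f)$, the paper phrasing it as ``$g$ is not divisible by any partial Hasse invariant, hence $p w_\beta \ge w_{\sigma^{-1}\circ\beta}$'' while you unfold the same contradiction explicitly. Your separate treatment of the $f_\gerp = 1$ case is fine but not strictly needed, since the isomorphism assertion of Theorem~\ref{thm:key} (trivially, between zero spaces) already covers it.
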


\begin{proof} This follows immediately from our main theorem. Let $\Phi(f)=\sum_\beta w_\beta \ee_\beta$ be the filtration of $f$.  Then there are integers $n_\beta\geq 0$, and $g \in M^\gI_{\Phi(f)}(\gn, \Fpbar)$, where 
\[
f = g\prod_{\beta\in\BB} {(H^\gerI_\beta)}^{n_\beta},
\]
and  such that $g$ is not divisible by any partial Hasse invariant $H^\gerI_\beta$.  Hence,  by Theorem \ref{thm:key}, we must have $p w_{\beta} \geq w_{\sigma^{-1}\circ{\beta}}$ for all $\beta \in \BB$, namely, $\Phi(f)=\sum_\beta w_\beta \ee_\beta \in C^{\min}$.
\end{proof}

\

It follows that the phenomenon of negative weights for mod-$p$ Hilbert modular forms  is controlled by the weights of partial Hasse invariants. More precisely we deduce the following result, recently independently proven by  
Goldring and Koskivirta (Corollary~3.2.4 of \cite{GK}):

\begin{corollary} \label{corollary: vanishing} If $M^\gI_\kk(\gn,\Fpbar) \neq 0$, then $\kk \in C^{\Hasse}$.
\end{corollary}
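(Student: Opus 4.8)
The plan is to deduce Corollary~\ref{corollary: vanishing} directly from Corollary~\ref{corollary: filtration} together with the definition of the weight filtration. Suppose $M^\gI_\kk(\gn,\Fpbar) \neq 0$, and choose a non-zero $f \in M^\gI_\kk(\gn,\Fpbar)$. By the definition of the weight filtration recalled in Section~3, there exist integers $n_\beta \geq 0$ and a form $g \in M^\gI_{\Phi(f)}(\gn,\Fpbar)$ with $f = g \prod_{\beta\in\BB} (H^\gerI_\beta)^{n_\beta}$, and by definition $\Phi(f) = \kk - \sum_\beta n_\beta \hh_\beta$. Rearranging this gives the identity
\[
\kk = \Phi(f) + \sum_{\beta\in\BB} n_\beta \hh_\beta .
\]
So the task reduces to showing that each of the two summands lies in $C^{\Hasse}$, since $C^{\Hasse}$ is a convex cone and is therefore closed under addition.

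For the second summand, this is immediate from the definition: $\sum_\beta n_\beta \hh_\beta$ is a non-negative combination of the $\hh_\beta$, which is exactly the defining condition for membership in $C^{\Hasse}$ (the $n_\beta$ are integers, hence in particular non-negative rationals). For the first summand, I would invoke Corollary~\ref{corollary: filtration}, which gives $\Phi(f) \in C^{\min}$, combined with the containment $C^{\min} \subset C^{\rm st} \subset C^{\Hasse}$ asserted just after the definition of the cones. This yields $\Phi(f) \in C^{\Hasse}$, and adding the two establishes $\kk \in C^{\Hasse}$.

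The only point requiring any care is the cone containment $C^{\rm st} \subset C^{\Hasse}$, which underlies passing from Corollary~\ref{corollary: filtration} to the desired conclusion. Concretely one must check that every non-negative real (rational) combination of the standard basis vectors $\ee_\beta$ can be rewritten as a non-negative combination of the $\hh_\beta$. Since $\hh_\beta = p\ee_{\sigma^{-1}\circ\beta} - \ee_\beta$, the change of basis from $\{\hh_\beta\}$ to $\{\ee_\beta\}$ is governed, on each $\BB_\gerp$, by a circulant-type matrix whose inverse has non-negative entries (reflecting the geometric series $1 + p^{-1} + p^{-2} + \cdots$ truncated cyclically, with denominator $p^{f_\gerp} - 1$). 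I expect this verification of non-negativity of the inverse transition matrix to be the main, though entirely routine, obstacle; it is precisely the computation that makes the containments genuine inclusions (and equalities exactly when every $f_\gerp = 1$, i.e.\ when $p$ splits completely). Granting this containment, the corollary follows in a single line from Corollary~\ref{corollary: filtration}.
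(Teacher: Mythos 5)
Your proposal is correct and follows essentially the same route as the paper: decompose $\kk = \Phi(f) + \sum_\beta n_\beta \hh_\beta$ via the definition of the weight filtration, apply Corollary~\ref{corollary: filtration} to get $\Phi(f) \in C^{\min} \subset C^{\Hasse}$, and note the second summand is in $C^{\Hasse}$ by definition. Your extra verification that $C^{\rm st} \subset C^{\Hasse}$ (via the non-negative inverse of the circulant transition matrix, with denominator $p^{f_\gerp}-1$) is correct but is something the paper simply asserts as "easily seen" when the cones are introduced.
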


\begin{proof} Assume $f$ is a nonzero form in $M^\gI_\kk(\gn,\Fpbar)$. By Corollary \ref{corollary: filtration}, we know that  
\[
\Phi(f)\in C^{\min} \subset C^\Hasse.
\]
By definition of filtration, there are nonnegative integers $n_\beta$ such that  $\kk=\Phi(f)+\sum n_\beta \hh_\beta$, which clearly belongs to $C^\Hasse$.

\end{proof}

\end{document}